\theoremstyle{plain}
\newtheorem{theorem}{Theorem}[section]
\newtheorem{definition}[theorem]{Definition}
\newtheorem{example}[theorem]{Example}
\newtheorem{remark}[theorem]{Remark}
\newtheorem{corollary}[theorem]{Corollary}
\newtheorem{prop}[theorem]{Proposition}
\title{Positive Steady-state Varieties of Small Chemical Reaction Networks}
\author[M. Curiel]{Maize Curiel}
\address{Department of Mathematics, University of Hawai`i at M\=anoa, 2565 McCarthy Mall (Keller Hall 401A)
Honolulu, Hawaii 96822, United States}
\email{curielm@hawaii.edu}
\author[E. Farr]{Elise Farr}
\address{Department of Mathematics and Statistics,  Boston University, 665 Commonwealth Ave, Boston, MA 02215, United States}
\email{enfarr@bu.edu}
\author[G. Fries]{Galileo Fries}
\author[L. Garc\'{\i}a Puente]{Luis David Garc\'{\i}a Puente}
\address{Department of Mathematics and Computer Science, Colorado College, Colorado Springs, CO 80946, United States}
\email{g\_fries@coloradocollege.edu, lgarciapuente@coloradocollege.edu}
\author[J. Hutchins]{Julian Hutchins}
\address{Department of Mathematics, Morehouse College, SW, Atlanta, GA 30314, United States}
\email{Julian.hutchins@morehouse.edu}
\author[V. Hoang]{Vuong Nguyen Hoang}
\address{Department of Mathematics, Wingate University, 211 E Wilson St, Wingate, NC 28174, United States}
\email{vu.nguyenhoang054@wingate.edu}
\date{\today}
\subjclass{Primary 65H10, 92E20; Secondary 12D10}
\keywords{chemical reaction network, positive steady state, mass-action kinetics, binomial ideal}
\begin{document}

%add author affiliations & contact info...

\begin{abstract}
Chemical reaction network theory is a field of applied mathematics concerned with modeling chemical systems, and can be used in other contexts such as in systems biology to study cellular signaling pathways or epidemiology to study the effect of human interaction on the spread of disease. In this paper, we seek to understand a chemical reaction network's equilibrium points through the lens of algebraic geometry by computing the positive part of the steady-state variety defined by polynomial equations arising from the assumption of mass-action kinetics. We provide a systematic classification of positive steady-state varieties produced by 2-species, 2-reaction networks, grounded in combinatorial and algebraic properties. While some (restricted) techniques exist to fully understand the ideal defining the positive steady-state variety, this computation presents a significant challenge in general. Our classification theorems provide a simplification of previous criteria, and aim to provide a foundation for future analysis of larger networks.

\end{abstract}

\maketitle

\section{Introduction}

Chemical reaction network theory is utilized in many areas of science including systems biology, epidemiology, and ecology to model systems where objects interact to form new products. For example, the authors of \cite{SGMGSG2011mbn} study reaction networks occurring in yeast which exhibit switch-like behavior, finding the smallest networks with the property of bistability. Many biological networks are bistable, meaning they have two distinct stable equilibrium points. Understanding networks' equilibrium points, or steady-states, has been a major focus of mathematical chemical reaction network theory. When viewing networks as systems which change over time, these steady-states are conditions which result in no net change in species' concentrations over time. Various theories of chemical kinetics can be used to represent how concentrations change, resulting in different mathematical models. Following the assumption of mass-action kinetics, a chemical reaction network can be translated into a system of polynomial differential equations whose solution set is an algebraic variety.

While many different areas of mathematics can be used to analyze chemical reaction networks to understand their various properties---including graph theory, combinatorics, and dynamical systems---algebraic geometry provides valuable tools to study the variety produced by networks' corresponding systems of equations. Traditional modeling approaches require numerical data that is often difficult or impossible to determine in complex biological systems; algebraic geometry provides a more flexible approach that analyzes networks' mathematical properties independent of any specific numerical values, concisely explained in \cite{Dickenstein2020tools}. For a more detailed discussion of algebraic geometry's use in chemical reaction network theory, the types of properties it can illuminate, and the types of biological networks it can model, see \cite{dickenstein2016invitation}.

After translating a network into a corresponding system of equations, the solution set of that system describes all its equilibrium points. However, the biologically relevant portions are points corresponding to steady-states with positive concentrations of all reacting species. This set is known as the positive steady-state locus, which is not necessarily an algebraic variety but rather is a semi-algebraic set. In this paper, we instead examine the components of the steady-state variety that nontrivially intersect the positive orthant, called the positive steady-state variety. The analysis of this variety requires techniques from real algebraic geometry that, in some restricted cases, may give a full description of the defining ideal. However, this computation continues to pose a significant challenge in general.

In this paper, we focus on small networks: those with two species, two reactions, and at-most-bimolecular complexes. We identify which two-reaction networks are capable of supporting positive steady-states (Theorem \ref{thm:nonempty_pssv}). When studying networks with two reactions, the corresponding equations are binomial, giving rise to binomial ideals and toric steady-state varieties. Other researchers have focused on networks with toric steady-states, notably in \cite{MDSC2012toric}; the conditions provided in Theorem \ref{thm:nonempty_pssv} are simplified significantly to better fit the 2-reaction case, and were developed independently. While most biological networks contain many more than two reactions, toric varieties do appear in biological contexts, including multi-site phosphorylation networks as studied in \cite{Adamer2020toricfamilies}.

Additionally, beyond identifying which small networks support positive steady-states, we classify 2-species small networks by the type of variety produced by giving additional conditions on the networks' reactant complexes (Sections \ref{sec:bimolec} and \ref{sec:semicubical}), and begin to build techniques to extend these classifications to networks with complexes of higher molecularity (Section \ref{sec:overlap}). The classification theorems in this paper rely on observations of the network's reactant complexes and stoichiometric matrix, which is simple to construct directly from the network. These conditions eliminate the need for potentially computationally expensive calculations of polynomial systems' solution sets or the more involved matrix calculations used in \cite{MDSC2012toric}.

\section{Background}\label{sec:CRN}

To understand the mathematical definition of chemical reaction networks, it is useful to follow an example to see how a chemical reaction network gives rise to a polynomial dynamical system. Consider a closed system containing two types of objects, or \textit{species}, labeled $A$ and $B$ whose interactions are represented diagrammatically as follows: 

$$\begin{tikzcd}
    A \ar[r,"\kappa_1", shift left] & 2B \ar[l, "\kappa_2", shift left]
\end{tikzcd}$$

This pictorial representation gives that within this closed system, one copy of $A$ produces two copies of $B$, and two copies of $B$ react to produce one copy of $A$. The \textit{complexes} are $A$ and $2B$; these are the nonnegative integer combinations of the species. The \textit{reactions} are represented as labeled arrows between complexes. The labels $\kappa_i$ are called the \textit{reaction rate constants} which can be thought of as governing the speed at which each reaction occurs. In this paper, we treat the rate constants as fixed (positive, real) unknowns in order to analyze the entire family of dynamical systems that arise from a given network as the rate constants vary.
With the goal of understanding how much of each species is present in the system over time---in other words, measuring the \textit{concentration} of each species---we examine the net change of each species in each reaction. For example, in the reaction \begin{tikzcd} A \ar[r, "\kappa_1"] & 2B \end{tikzcd}, consider species $A$. There is one copy of $A$ at the tail of the reaction arrow, in the \textit{reactant complex}, and zero copies of $A$ at the tip of the arrow, in the \textit{product complex}. So, $A$ experiences a net change in concentration of $-1$ when this reaction occurs once. Species $B$, however, experiences a net change in concentration of $+2$. These integer values, called the \textit{stoichiometric coefficients}, are often arranged into a matrix:
$$N= \begin{bNiceMatrix}[first-row, first-col] & A \rightarrow 2B & 2B \rightarrow A \\ A & -1 & \phantom{-}1 \\ B & \phantom{-}2 & -2 \end{bNiceMatrix}$$

This is called the \textit{stoichiometric matrix} of the network. As indicated above, each row represents the net change of a given species over all reactions, and each column represents the net change of all species over a given reaction. 

To understand how species' concentrations change over time within a network, we combine the stoichiometric matrix with the assumption of mass-action kinetics to generate a system of polynomial differential equations. Under the assumption of mass-action kinetics, a reaction's output rate is proportional to the product of the concentrations of the reacting species. Each species' change in concentration over time is then a function of the species' concentrations, where each reaction contributes a term. Let $x_A(t)$ and $x_B(t)$ denote the concentrations of $A$ and $B$ respectively at time $t$. The forward reaction \begin{tikzcd} [column sep=small] A \ar[r, "\kappa_1"] & 2B \end{tikzcd} contributes the term $-\kappa_1 x_A$ to the change in concentration of $A$, and the term $2\kappa_1 x_B$ to the change in concentration of $B$. These are monomial terms constructed by multiplying the stoichiometric coefficient, the reaction rate constant, and the reacting species' concentrations. Examining the entire network, mass-action kinetics give rise to the following system of polynomial ordinary differential equations: 
\begin{align*}
    \frac{dx_A}{dt}&= f_A(x) := -\kappa_1 x_A +\kappa_2 x_B^2 \\
    \frac{dx_B}{dt}&= f_B(x) := 2\kappa_1 x_A -2\kappa_2 x_B^2
\end{align*}

In general, a \textit{chemical reaction network} $G=(\mathcal{S},\mathcal{C},\mathcal{R})$ is made up of three finite sets: first, a set of species $\mathcal{S}=\{A_1,A_2,\dots, A_s\}$; second, a set of complexes $\mathcal{C}=\{y_1,y_2,\dots,y_p\}$, where each $y_i$ is a formal sum of the species with nonnegative integer coefficients; and finally a set of reactions $\mathcal{R} \subseteq (\mathcal{C} \times \mathcal{C})\setminus \{(y,y) \mid y\in \mathcal{C}\}$, which are non-diagonal ordered pairs of complexes such that $(y_i, y_j)$ corresponds to the reaction graphically represented as $y_i\rightarrow y_j$. Each reaction also has a positive real rate constant $\kappa_i$. As in the above example, we construct the \textit{steady-state polynomials}, one for each species, where each reaction contributes one monomial term.

To build the steady-state polynomials in general, we require some additional definitions and notation. Writing out the $i^{\text{th}}$ complex gives $y_i := \alpha_{1i}A_1+\alpha_{2i}A_2+\dotsm +\alpha_{si}A_s$, where the $\alpha_{ki}$ are the coefficients on the $k^{\text{th}}$ species in the $i^{\text{th}}$ complex. We collect information from these coefficients in the stoichiometric matrix $N$, whose rows correspond to species and whose columns correspond to reactions. The entry in the row corresponding to the $k^{\text{th}}$ species and the column corresponding to the reaction $(y_i, y_j)$ is the difference $\alpha_{kj}-\alpha_{ki}$, which is the net change in species $A_k$. Next, we construct a diagonal matrix of the reaction rates $\text{diag}(\kappa)$ whose nonzero entries are the reaction rates $\kappa_i$. Finally, we construct a vector $x^B$ of monomials in the species' concentrations. Here $B$ is the \textit{reactant matrix}, where again each column corresponds to a reaction and each row corresponds to a species; the column corresponding to the reaction $(y_i, y_j)$ lists the $\alpha_{ki}$ coefficients of the reactant complex $y_i$ for $k=1,\dots,s$. If $b_i = (\alpha_{1i}, \ldots, \alpha_{si})$ is the $i^\text{th}$ column of $B$, then $x^{b_i} = x_{A_1}^{\alpha_{1i}} \cdots x_{A_s}^{\alpha_{si}}$ is the $i^\text{th}$ entry of $x^B$. Then, 
the dynamical system arising from the chemical reaction network $G$ and a choice of rate parameters $\kappa$ is given by
\begin{equation}\label{mass-action-system}
\frac{dx}{dt} =  N \cdot \text{diag}(\kappa) \cdot x^B =: f_\kappa(x) = (f_\kappa(x)_1, f_\kappa(x)_2,\dots,f_\kappa(x)_s)
\end{equation}
The steady-state polynomials are the expressions on the right hand side of Equation \ref{mass-action-system}, and the \emph{steady-state equations} are obtained by setting the steady-state polynomials equal to zero. The solutions to the system of steady-state equations are the  equilibrium points of the network. These values may or may not depend on the specific values of the rate constants, discussed below in Section \ref{sec:indep}.

\begin{definition} A \textbf{steady-state} of a chemical reaction network (together with fixed rate constants) is a tuple of species concentrations $x = (x_{A_1}, x_{A_2}, \dots ,x_{A_s})\in\mathbb{R}^s$ that satisfies all
the steady-state equations. 
\end{definition}

Intuitively, steady-states are inputs that will result in no change in the species' concentrations over time. The solution set of the system of steady-state equations forms a complex algebraic variety called the \textit{steady-state variety}. In the opening example network of Section \ref{sec:CRN}, the two steady-state polynomials are constant multiples of each other, and so their individual zero sets are identical. Setting either equation equal to zero gives that the steady-state variety is the curve $x_A=\frac{\kappa_2}{\kappa_1}x_B^2$, which is a parabola opening in the positive $x_A$ direction. 

Of particular interest is the \textit{positive steady-state variety}, which is the Zariski closure of the intersection of the steady-state variety with the interior of the positive orthant of $\mathbb{R}^s$. This variety represents all biologically relevant steady-states. In our example, the positive steady-state variety is the same as the steady-state variety. However, in general, there is no algorithm to find the positive real part of the steady-state variety of a network, so our approach is to restrict our focus to a specific category of small chemical reaction networks.

Networks are sometimes grouped together based on their size. One notion of size is the number of species and the number of reactions in the network. In this paper, we consider networks with 2 species and 2 reactions. Additionally, we require that both species appear at least once among the network's complexes. Networks with this extra property are defined as \textit{genuine} networks, as in \cite{NetworkEnumeration}. To capture a different notion of size, we consider the network's complexes:

\begin{definition}
    The \textbf{molecularity} of a complex $y_i = \alpha_{i1}A_1+\alpha_{i2}A_2+\dotsm +\alpha_{is}A_s$ is the sum $\sum_{j=1}^s \alpha_{ij}$.  
\end{definition}

Intuitively, this measures the number of total objects appearing in a complex. A network can then be characterized by whether its complexes satisfy a fixed upper bound on their molecularity. If this upper molecularity bound is $n$, an \textit{at-most-n-molecular} network gives rise to steady-state polynomials whose degree is bounded above by $n$. For example, the above network of \begin{tikzcd} A \ar[r,"\kappa_1", shift left] & 2B \ar[l, "\kappa_2", shift left] \end{tikzcd} is \textit{at-most-bimolecular}, since the largest complex $2B$ contains two objects (and thus the degree of its steady-state polynomials is at most two). Most complexes considered in this paper are either unimolecular, bimolecular, or trimolecular, meaning they contain one, two, or three objects. We focus mainly on the at-most-bimolecular case. As such, a genuine, at-most-bimolecular, 2-species, 2-reaction chemical reaction network will henceforth be referred to as a ``small network," with different molecularity bounds or numbers of species specifically indicated as necessary. For a given $s$-species small network, the 
general forms of its reactions, steady-state polynomials, and stoichiometric matrix are given by the following formulas, respectively:

\begin{equation} \label{general22}
%\left\{
\begin{tikzcd}[row sep=tiny] \alpha_{10} A_1 + \alpha_{20} A_2 + \dotsm + \alpha_{s0} A_s \ar[r, "\kappa_1"] & \alpha_{11} A_1 + \alpha_{21} A_2 + \dotsm + \alpha_{s1} A_s \\ \alpha_{12} A_1 + \alpha_{22} A_2 + \dotsm + \alpha_{s2} A_s \ar[r, "\kappa_2"] & \alpha_{13} A_1 + \alpha_{23} A_2 + \dotsm + \alpha_{s3} A_s \end{tikzcd}
%\right\}
\end{equation}

\begin{equation}\label{generalSS}
f(x)=\begin{cases}
f_{A_1}(x) &= (\alpha_{11}-\alpha_{10})\kappa_1 x_{A_1}^{\alpha_{10}} x_{A_2}^{\alpha_{20}}\cdots x_{A_s}^{\alpha_{s0}} + (\alpha_{13}-\alpha_{12})\kappa_2 x_{A_1}^{\alpha_{12}} x_{A_2}^{\alpha_{22}}\cdots x_{A_s}^{\alpha_{s2}}\\
f_{A_2}(x) &= (\alpha_{21}-\alpha_{20})\kappa_1 x_{A_1}^{\alpha_{10}} x_{A_2}^{\alpha_{20}}\cdots x_{A_s}^{\alpha_{s0}} + (\alpha_{s3}-\alpha_{s2})\kappa_2 x_{A_1}^{\alpha_{12}} x_{A_2}^{\alpha_{22}}\cdots x_{A_s}^{\alpha_{s2}}\\
&\vdots\\
f_{A_s}(x) &= (\alpha_{s1}-\alpha_{s0})\kappa_1 x_{A_1}^{\alpha_{10}} x_{A_2}^{\alpha_{20}}\cdots x_{A_s}^{\alpha_{s0}} + (\alpha_{s3}-\alpha_{s2})\kappa_2 x_{A_1}^{\alpha_{12}} x_{A_2}^{\alpha_{22}}\cdots x_{A_s}^{\alpha_{s2}}
\end{cases}
\end{equation}

\begin{equation}\label{generalSM}
N=\begin{bmatrix}
    \alpha_{11}-\alpha_{10} & \alpha_{13}-\alpha_{12} \\ \alpha_{21}-\alpha_{20} & \alpha_{23}-\alpha_{22} \\ \vdots & \vdots \\ \alpha_{s1}-\alpha_{s0} & \alpha_{s3}-\alpha_{s2}
\end{bmatrix}
\end{equation}

In the 2-species case, the species will be labeled $A$ and $B$ with corresponding stoichiometric coefficients $\alpha_i$ and $\beta_i$ and corresponding variables $x$ and  $y$ for simplicity of notation.

\section{Conditions for a Nonempty Positive Steady-State Variety}

When examining 2-species small networks, all networks that support a nonempty positive steady-state variety share a common property in their stoichiometric matrix: the columns are negative multiples of each other. Indeed, this property is shared by all 2-reaction networks with nonempty positive steady-state varieties. In the following proof, $x_{A_i}=:x_i$ for simplicity of notation, and the terms ``positive" or ``negative" denote values that are strictly greater or less than zero, respectively.

\begin{theorem} \label{thm:nonempty_pssv}
    For generic choices of rate constants, a two-reaction network with any number of species and with complexes of any molecularity produces a nonempty positive steady-state variety if and only if the columns of its stoichiometric matrix are (nonzero) negative constant multiples of each other, and its reactant complexes are not identical.
\end{theorem}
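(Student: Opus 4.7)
The approach is to exploit the binomial structure of the steady-state polynomials: for a two-reaction network, writing $n_1, n_2 \in \mathbb{R}^s$ for the two columns of $N$ and $b_1, b_2$ for the reactant-complex exponent vectors, Equation (\ref{mass-action-system}) becomes
$$f_\kappa(x) \;=\; \kappa_1 x^{b_1}\, n_1 \;+\; \kappa_2 x^{b_2}\, n_2,$$
so a positive steady state is exactly a positive $x$ making this vector vanish. Each direction of the biconditional should fall out of manipulating this two-term equation.

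For the forward ("only if") direction, I would begin by assuming a positive steady state $x \in \mathbb{R}^s_{>0}$ exists. Setting $c_1 := \kappa_1 x^{b_1}$ and $c_2 := \kappa_2 x^{b_2}$, both positive, the relation $c_1 n_1 + c_2 n_2 = 0$ forces $n_2 = -(c_1/c_2)\, n_1$ with positive proportionality constant, giving the negative-multiple condition. A short check using the definition $\mathcal{R} \subseteq (\mathcal{C}\times\mathcal{C}) \setminus \{(y,y)\}$ shows that neither column of $N$ can be zero (else the corresponding reaction would have equal reactant and product), so the multiples are nonzero. To conclude that the reactants must differ, I would argue by contrapositive: if $b_1 = b_2$, then $f_\kappa(x) = (\kappa_1 n_1 + \kappa_2 n_2)\, x^{b_1}$, and a positive steady state exists only when $\kappa_1 n_1 + \kappa_2 n_2 = 0$, which is a codimension-one linear condition on $(\kappa_1,\kappa_2)$ and hence fails for generic rate constants.

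For the backward ("if") direction, assume $n_2 = -\lambda n_1$ with $\lambda > 0$ and $b_1 \neq b_2$. The system $f_\kappa(x) = 0$ then collapses to the single scalar equation
$$x^{b_1 - b_2} \;=\; \frac{\lambda\, \kappa_2}{\kappa_1}.$$
Because $v := b_1 - b_2$ is a nonzero integer vector, some coordinate $v_k$ is nonzero; fixing $x_j = 1$ for $j \neq k$ reduces the equation to $x_k^{v_k} = \lambda \kappa_2/\kappa_1$, which admits a unique positive real solution. This produces an explicit point in the positive steady-state variety and finishes the proof.

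The main obstacle is really just bookkeeping around the genericity hypothesis: the stoichiometric condition alone is necessary, but it is not sufficient without the reactant-complex condition, and without genericity the reactant-complex condition itself is not necessary. I would therefore make sure, in the forward direction, to keep the non-generic case $b_1 = b_2$ clearly isolated as a codimension-one slice of parameter space on which the theorem's conclusion can legitimately fail. The algebraic content beyond this is elementary, since the two-reaction assumption reduces everything to a single binomial equation.
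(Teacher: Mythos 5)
Your proposal is correct, and the backward direction is essentially the paper's argument (collapse to the single binomial equation $x^{b_1-b_2}=\lambda\kappa_2/\kappa_1$, fix all but one coordinate, solve). The forward direction, however, is genuinely different and considerably more direct. The paper proceeds by contrapositive with a case analysis on the rank of the stoichiometric matrix: if the columns are positive multiples of one another it makes a sign argument on a single binomial, and if they are not multiples at all it selects two linearly independent rows and shows the two resulting binomial equations define zero sets with distinct scaling factors that can only meet on coordinate hyperplanes. Your version replaces all of this with the one-line observation that a positive steady state yields $c_1 n_1 + c_2 n_2 = 0$ with $c_1 = \kappa_1 x^{b_1} > 0$ and $c_2 = \kappa_2 x^{b_2} > 0$, which immediately forces the columns to be negative multiples of each other; the nonvanishing of each column follows, as you note, from $\mathcal{R}$ excluding reactions $(y,y)$. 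This buys a shorter, case-free proof that also makes transparent why the stoichiometric condition is necessary for \emph{every} choice of positive rate constants, with genericity needed only to rule out the identical-reactant degeneration $\kappa_1 n_1 + \kappa_2 n_2 = 0$ --- a separation the paper also makes (via its Section on rate independence) but less explicitly. What the paper's longer route buys in exchange is an explicit geometric description of \emph{where} the steady states live when the conditions fail (namely, confined to coordinate hyperplanes), which it reuses in later arguments; your proof discards that information. Both arguments are sound.
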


\begin{proof} 
$(\Rightarrow)$ We give a proof by contrapositive, showing that if either condition does not hold then the positive steady-state variety is empty. First, suppose that the two reactant complexes are identical and are $\alpha_{1} A_1+\alpha_2 A_2 + \dotsm + \alpha_s A_s$. Then, after factoring, all $s$ steady-state equations take the form $cx_1^{\alpha_1}x_2^{\alpha_2}\cdots x_s^{\alpha_s}=0$ for some (possibly different) real constants $c$. It is possible for $c$ to equal zero, discussed in Section \ref{sec:indep} below. However, for generic choices of rate constants this equation only has solutions when at least one variable equals zero, corresponding to coordinate hyperplane solutions. Recall that the steady-state variety is the intersection of the individual zero sets of the steady-state equations, meaning the variety is therefore also restricted to the coordinate hyperplanes, and the  positive steady-state variety is empty. So, if a network has a nonempty positive steady-state variety, its reactant complexes must be non-identical. 

Now, suppose that the columns of the stoichiometric matrix are not negative multiples of each other. There are two cases to consider, as the columns being multiples of each other by zero implies that one of the reactions is trivial. First, consider the case where the columns are positive multiples of each other. This means the rank of the stoichiometric matrix is one, and so the rows must all be multiples of each other as well. The variable and rate constant appearing in each monomial term are fixed by the reaction, so the $s$ steady-state equations differ only by a constant real factor and we need only consider one equation when examining the steady-state variety. Without loss of generality, suppose that $f_{A_1}$ is nonzero and consider its general form
\begin{align*} f_{A_1} &= (\alpha_{11}-\alpha_{10})\kappa_1 x_{1}^{\alpha_{10}} x_{2}^{\alpha_{20}}\cdots x_{s}^{\alpha_{s0}} + (\alpha_{13}-\alpha_{12})\kappa_2 x_{1}^{\alpha_{12}} x_{2}^{\alpha_{22}}\cdots x_{s}^{\alpha_{s2}}\\
&=c_0x_{1}^{\alpha_{10}} x_{2}^{\alpha_{20}}\cdots x_{s}^{\alpha_{s0}}+c_2x_{1}^{\alpha_{12}} x_{2}^{\alpha_{22}}\cdots x_{s}^{\alpha_{s2}} \end{align*}
where the (nonzero) real constants $c_0$ and $c_2$ encode the reaction rates and the stoichiometric coefficients. The steady-state variety is then given by
$$-x_{1}^{\alpha_{10}} x_{2}^{\alpha_{20}}\cdots x_{s}^{\alpha_{s0}}=\frac{c_2}{c_0}x_{1}^{\alpha_{12}} x_{2}^{\alpha_{22}}\cdots x_{s}^{\alpha_{s2}}.$$
The reaction rates are always positive and the stoichiometric coefficients have the same sign by assumption, so $c_0$ and $c_2$ must have the same sign; thus the fraction $\frac{c_2}{c_0}$ is positive. We wish to show that the $x_i$ cannot all be simultaneously positive. Suppose without loss of generality that $x_1,\dots, x_{s-1}$ are positive. Then, the sign of both sides is determined entirely by the value of the powers of $x_s$. For equality to hold, the two sides must first have the same sign, which can clearly only happen if $x_s$ is negative or zero. Thus, no positive steady-states can exist. 

Now, consider the case where the columns are not multiples of each other. Then, the rank of the matrix is two, giving that there exists at least one pair of nonzero rows that are linearly independent and therefore not multiples of each other. If any steady-state polynomial is a monomial, the positive steady-state variety is empty. However, both steady-state polynomials corresponding to a pair of linearly independent rows could be binomials. Suppose without loss of generality that the first two rows are not multiples of each other, and consider the general form steady-state polynomials for $A_1$ and $A_2$:
\begin{align*} f_{A_1} &= (\alpha_{11}-\alpha_{10})\kappa_1 x_{1}^{\alpha_{10}} x_{2}^{\alpha_{20}}\cdots x_{s}^{\alpha_{s0}} + (\alpha_{13}-\alpha_{12})\kappa_2 x_{1}^{\alpha_{12}} x_{2}^{\alpha_{22}}\cdots x_{s}^{\alpha_{s2}}\\
f_{A_2} &= (\alpha_{21}-\alpha_{20})\kappa_1 x_{1}^{\alpha_{10}} x_{2}^{\alpha_{20}}\cdots x_{s}^{\alpha_{s0}} + (\alpha_{23}-\alpha_{22})\kappa_2 x_{1}^{\alpha_{12}} x_{2}^{\alpha_{22}}\cdots x_{s}^{\alpha_{s2}} \end{align*}

where none of the stoichiometric coefficients are zero. Set equal to zero, we can rewrite $f_A$ and $f_B$ respectively as 
\begin{align*} -x_{1}^{\alpha_{10}} x_{2}^{\alpha_{20}}\cdots x_{s}^{\alpha_{s0}} &= \frac{\kappa_2(\alpha_{13}-\alpha_{12})}{\kappa_1(\alpha_{11}-\alpha_{10})}x_{1}^{\alpha_{12}} x_{2}^{\alpha_{22}}\cdots x_{s}^{\alpha_{s2}}\\
\text{  and } -x_{1}^{\alpha_{10}} x_{2}^{\alpha_{20}}\cdots x_{s}^{\alpha_{s0}} &= \frac{\kappa_2(\alpha_{23}-\alpha_{22})}{\kappa_1(\alpha_{21}-\alpha_{20})}x_{1}^{\alpha_{12}} x_{2}^{\alpha_{22}}\cdots x_{s}^{\alpha_{s2}}.\end{align*}

The fractions are real number scaling factors, affecting a stretch or shrink on the curve. The values of $\kappa_1$ and $\kappa_2$ are fixed, so the relative steepness of the two zero sets is determined by the ratios of the stoichiometric coefficients. If 
$$\frac{\alpha_{13}-\alpha_{12}}{\alpha_{11}-\alpha_{10}}=\frac{\alpha_{23}-\alpha_{22}}{\alpha_{21}-\alpha_{20}},$$
then the zero sets overlap completely. Consider the $2\times 2$ submatrix of the stoichiometric matrix corresponding to these two species:

$$\begin{bmatrix}
    \alpha_{11}-\alpha_{10} & \alpha_{13}-\alpha_{12} \\ \alpha_{21}-\alpha_{20} & \alpha_{23}-\alpha_{22} 
\end{bmatrix}$$

By assumption, these two rows of the stoichiometric matrix are linearly independent, so this submatrix has rank two. But, the above equality implies that the columns are multiples of each other, giving a rank of one, meaning that the zero sets of $f_{A_1}$ and $f_{A_2}$  must have different scaling factors. A solution to the system with all $x_i\neq 0$ would require a nonzero left-hand side of the equation to simultaneously equal two distinct nonzero values, which is impossible. So, these two zero sets can only intersect at the origin and/or in coordinate hyperplanes. The steady-state variety of the network is the intersection of the zero sets of all $s$ steady-state equations, and so is also restricted to the coordinate hyperplanes, resulting in an empty positive steady-state variety. Thus, if a network has a nonempty positive steady-state variety, the columns of its stoichiometric matrix are negative multiples of each other. 

$(\Leftarrow)$ Now, suppose that a network satisfies the above conditions. The relationship between the columns means the stoichiometric matrix has rank one, meaning the rows are all multiples of each other. Therefore, all of the steady-state equations are constant multiples of each other, since the other components of each monomial term are fixed by the reaction. All of the nonzero equations thus define the same zero set, and we need only consider one equation to fully understand the steady-state variety. Without loss of generality, suppose that $f_{A_1}$ is nonzero. In general form,

\begin{align*} f_{A_1} &= (\alpha_{11}-\alpha_{10})\kappa_1 x_{1}^{\alpha_{10}} x_{2}^{\alpha_{20}}\cdots x_{s}^{\alpha_{s0}} + (\alpha_{13}-\alpha_{12})\kappa_2 x_{1}^{\alpha_{12}} x_{2}^{\alpha_{22}}\cdots x_{s}^{\alpha_{s2}}\\
&=c_0x_{1}^{\alpha_{10}} x_{2}^{\alpha_{20}}\cdots x_{s}^{\alpha_{s0}}+c_2x_{1}^{\alpha_{12}} x_{2}^{\alpha_{22}}\cdots x_{s}^{\alpha_{s2}}, \end{align*}

where the (nonzero) $c_i$ encode the rate constants and stoichiometric coefficients. We wish to verify the existence of at least one solution where all of the $x_i$ are simultaneously positive. Setting equal to zero, the steady-state variety is defined by

$$x_{1}^{\alpha_{10}} x_{2}^{\alpha_{20}}\cdots x_{s}^{\alpha_{s0}}=-\frac{c_2}{c_0}x_{1}^{\alpha_{12}} x_{2}^{\alpha_{22}}\cdots x_{s}^{\alpha_{s2}}.$$

The value of $-\frac{c_2}{c_0}$ is some positive real constant because the $c_i$ have opposite signs by assumption. To understand the possible values of the exponents, consider the stoichiometric matrix

$$N=\begin{bmatrix}
    \alpha_{11}-\alpha_{10} & \alpha_{13}-\alpha_{12} \\ \alpha_{21}-\alpha_{20} & \alpha_{23}-\alpha_{22} \\ \vdots & \vdots \\ \alpha_{s1}-\alpha_{s0} & \alpha_{s3}-\alpha_{s2}
\end{bmatrix}$$

The $\alpha_{ij}$ are all nonnegative integers. The values where $j$ is even correspond to our exponents and cannot all be zero, as this would result in an entirely nonnegative stoichiometric matrix, contradicting our assumption. Additionally, if $\alpha_{10}=0$, then $\alpha_{12}\neq0$ because $\alpha_{13}-\alpha_{12}$ must be negative by assumption. Similarly, if $\alpha_{12}=0$ then $\alpha_{10}\neq0$. Without loss of generality, suppose that $\alpha_{10}\neq 0$. Returning to the steady-state equation, we can group the variables:
$$x_1^{\alpha_{10}-\alpha_{12}}=-\frac{c_2}{c_0}x_2^{\alpha_{22}-\alpha_{20}}\cdots x_s^{\alpha_{s2}-\alpha_{s0}}$$
This division has the potential to eliminate shared copies of a given variable from both monomial terms of the steady-state equation, which corresponds to removing coordinate hyperplane solutions which are never positive, and therefore is appropriate. By our assumption that the reactant complexes are not identical, these new exponents cannot all be simultaneously zero, so at least one of the $x_i$ will always have a nonzero exponent. Suppose without loss of generality that $\alpha_{10}-\alpha_{12}\neq 0$, and consider some point in $\mathbb{R}^s$ where $x_i=b_i$ for some positive constant $b_i$ for all $i\neq 1$. Then, the equation becomes
$$x_1^{\alpha_{10}-\alpha_{12}}=-\frac{c_2}{c_0}b_2^{\alpha_{22}-\alpha_{20}}\cdots b_s^{\alpha_{s0}-\alpha_{s2}}\Rightarrow x_1=\left(-\frac{c_2}{c_0}b_2^{\alpha_{22}-\alpha_{20}}\cdots b_s^{\alpha_{s0}-\alpha_{s2}}\right)^{\frac{1}{\alpha_{10}-\alpha_{12}}}$$
The right hand side of the equation is a positive real number, as it consists only of positive numbers raised to different powers. So, a solution exists to the equation defining the steady-state variety such that all variables are simultaneously positive, meaning that the steady-state variety intersects the interior of the positive orthant and thus the positive steady-state variety is nonempty. 
\end{proof}

\begin{remark}
Theorem \ref{thm:nonempty_pssv} is related (but different) to Lemma 2.7 in \cite{16M1069705}. This lemma implies that given a $2$-reaction network, the resulting mass-action system has at least one positive steady-state in some \emph{stoichiometric compatibility class} if and only if its reaction vectors are negative scalar multiples of each other. 
\end{remark} 

\begin{remark}
The condition on the columns of the stoichiometric matrix can also be described with the notion of T-alternating subnetworks described in \cite{Obatake_2020}. Through this lens we instead require that, when viewing the behavior of each species individually, the network is 1-alternating.
\end{remark} 

\begin{remark}
%As an additional note, 
The steady-state ideals of all 2-reaction networks are binomial ideals, since the corresponding steady-state polynomials are binomials; so, the steady-state varieties (when irreducible) are \emph{toric varieties}. Networks with toric nonempty positive steady-state varieties have been studied previously, most notably in \cite{MDSC2012toric}. The authors give three conditions to classify networks with toric nonempty positive steady-state varieties. Their Condition $3.1$ is unnecessary in the 2-reaction case, as the varieties are necessarily toric. Furthermore, 2-reaction networks trivially satisfy Condition $3.6$, as the matrix $\Delta$ from \cite{MDSC2012toric} has full column rank; this condition also encapsulates the second condition above by removing solutions contained in the coordinate hyperplanes. Finally, their Condition $3.4$ is equivalent to having the columns of the stoichiometric matrix be (nonzero) negative multiples of each other, the first condition above. Then, 2-reaction networks can be considered a special case of the toric networks discussed in \cite{MDSC2012toric}, and the above theorem an independently derived simplification of their conditions for small networks to have nonempty positive steady-state varieties.
\end{remark}

\subsection{Reaction Rate Independence}\label{sec:indep}

In some reaction networks, the specific values of the rate constants do not significantly impact the shape of the positive steady-state variety. They may alter the slope of a line or the steepness of a parabola, but the overall form and existence remains the same. For example, the positive steady-state variety of the network presented in Section \ref{sec:CRN} is a parabola for all choices of rate constants. However, this is not always the case, as seen in the following 1-species, 3-reaction network:

\begin{example} Consider the one-species network in Figure \ref{fig:one_species_network}. The steady-state variety is defined by the polynomial $f_A=\kappa_1x_A-\kappa_2x_A-\kappa_3x_A^2$.
Factoring and setting equal to zero to find the steady-state variety, we see that $0=x_A(\kappa_1-\kappa_2-\kappa_3x_A)$. The steady-state variety consists of the point $x_A=0$ and the point $x_A=\frac{\kappa_1-\kappa_2}{\kappa_3}$. The three rate constants are always positive, meaning that the second point in the variety will be positive only when $\kappa_1>\kappa_2$.
\end{example}

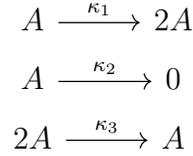
\begin{figure}[ht]
    \centering
    \begin{tikzcd} [row sep = tiny]
    A \ar[r,"\kappa_1"] & 2A \\
    A \ar[r,"\kappa_2"] & 0 \\
    2A \ar[r,"\kappa_3"] & A
    \end{tikzcd}
    \caption{1-species, 3-reaction network exhibiting rate dependence}
    \label{fig:one_species_network}
\end{figure}

In other words, the existence of a nonempty positive steady-state variety depends on the specific values of the rate constants. This problem of rate-dependence greatly complicates the study of positive steady-state varieties. Rate-dependence in the 2-reaction case is discussed in the following corollary.

\begin{corollary}\label{cor:2r_rate_indep}
    If a small network with complexes of any molecularity has non-identical reactant complexes, the existence of a nonempty positive steady-state variety is independent of the specific choices of reaction rates. For a small network with identical reactant complexes, there exist rate constants giving a nonempty positive steady-state variety if and only if it satisfies the other condition of Theorem \ref{thm:nonempty_pssv}; for that choice of rate constants, the positive steady-state variety is $\mathbb{R}_{>0}$.
\end{corollary}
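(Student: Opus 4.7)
The plan is to leverage the proof of Theorem \ref{thm:nonempty_pssv} directly, auditing where the rate constants actually influence the argument, and then separately analyzing the identical-reactant case, where the theorem's ``generic'' caveat becomes active.

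For the first statement, I would revisit both directions of the proof of Theorem \ref{thm:nonempty_pssv} under the hypothesis of non-identical reactant complexes and verify that the rate constants $\kappa_1, \kappa_2 > 0$ enter only as positive multiplicative factors, and therefore never flip any sign or rank condition. In the ($\Leftarrow$) direction, the pivotal quantity $-c_2/c_0$ used to construct a positive solution equals $(\kappa_2/\kappa_1)$ times a ratio of net stoichiometric coefficients; since the stoichiometric columns are negative multiples, that ratio is negative and hence $-c_2/c_0$ is positive for every positive rate choice. In the ($\Rightarrow$) contrapositive, the contradictions obtained in the ``positive multiples'' and ``independent columns'' subcases rely only on sign comparisons and the rank of the stoichiometric matrix, neither of which depends on $\kappa$. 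Thus, whether the positive steady-state variety is nonempty is determined entirely by the stoichiometric matrix, independent of the rate constants.

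For the second statement, let $y = \alpha_1 A_1 + \cdots + \alpha_s A_s$ be the common reactant complex. Each steady-state polynomial factors as
$$f_{A_k}(x) = \bigl( N_{k,1}\kappa_1 + N_{k,2}\kappa_2 \bigr)\, x_1^{\alpha_1} \cdots x_s^{\alpha_s},$$
where $N_{k,j}$ denotes the $(k,j)$ entry of the stoichiometric matrix. On the open positive orthant the monomial factor is strictly positive, so a positive steady-state exists precisely when every coefficient vanishes, i.e., when $\kappa_1 N^{(1)} + \kappa_2 N^{(2)} = 0$ as a vector equation. Because $\kappa_1, \kappa_2 > 0$, this linear relation admits a solution if and only if the columns $N^{(1)}$ and $N^{(2)}$ are nonzero negative multiples of one another, which is exactly the other condition of Theorem \ref{thm:nonempty_pssv}. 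Taking $\kappa_1/\kappa_2$ to equal the corresponding positive scalar then makes every $f_{A_k}$ identically zero; the steady-state variety is all of $\mathbb{C}^s$, so its intersection with the open positive orthant is the full positive orthant, as claimed.

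The main obstacle is bookkeeping for boundary situations rather than anything conceptual: I need to handle the degenerate possibility that $y$ is the zero complex (so $x^y \equiv 1$ and the ``coordinate hyperplane'' escape route disappears), and check that a row in which the net stoichiometric change is zero does not spuriously widen the solution set. In the zero-complex case the factored polynomials are nonzero constants whenever a coefficient is nonzero, which leaves no steady-states at all; the equivalence then follows by the same linear-algebra argument, and the resulting columns can be negative multiples only if both reactions are trivial, which is excluded for genuine 2-reaction networks. These remaining checks should be dispatched in a short paragraph after the main argument is in place.
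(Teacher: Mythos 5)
Your proposal is correct and follows essentially the same route as the paper: for non-identical reactant complexes it reduces to the proof of Theorem \ref{thm:nonempty_pssv} and observes that the rate constants enter only as positive multiplicative factors, and for identical reactant complexes it factors out the common monomial and characterizes when all coefficients $\kappa_1 N_{k,1}+\kappa_2 N_{k,2}$ can vanish simultaneously, which is exactly the paper's argument phrased as a single vector equation. Your extra attention to the zero-complex and zero-row degeneracies is a small refinement the paper glosses over, but it does not change the structure of the proof.
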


\begin{proof}
Consider some small network with non-identical reactant complexes that has a nonempty positive steady-state variety for some specific values of the rate constants. As in the forwards direction of Theorem \ref{thm:nonempty_pssv}, this network has a stoichiometric matrix whose columns are negative multiples of each other. Following the logic from the backwards direction of Theorem \ref{thm:nonempty_pssv}, the steady-state variety is defined by
$$x_{1}^{\alpha_{10}}x_{2}^{\alpha_{20}}\cdots x_{s}^{\alpha_{s0}} = -\frac{\kappa_2(\alpha_{23}-\alpha_{22})}{\kappa_1(\alpha_{21}-\alpha_{20})}x_{1}^{\alpha_{12}} x_{2}^{\alpha_{22}}\cdots x_{s}^{\alpha_{s2}}.$$
Since the $\kappa_i$ are always positive and the stoichiometric coefficients have opposite signs, the coefficient on the right hand side is positive regardless of the exact values of the rate constants. Call the value of this fraction $c$. We wish to show that a positive solution exists. To this end, we can divide the like variable terms: 
$$x_1^{\alpha_0-\alpha_2}=cx_2^{\alpha_{22}-\alpha_{20}}\cdots x_s^{\alpha_{s2}-\alpha_{s0}}$$
The logic is identical to that presented at the end of Theorem \ref{thm:nonempty_pssv}; a positive steady-state exists, and its existence did not depend on the particular values of the rate constants. 

If the two reactant complexes are identical, then each steady-state polynomial takes the form 
$$f_{A_i}=c_ix_1^{\alpha_1}x_2^{\alpha_2}\dots x_s^{\alpha_s}=((\alpha_{i1}-\alpha_{i0})\kappa_1+(\alpha_{i3}-\alpha_{i2})\kappa_2)x_1^{\alpha_1}x_2^{\alpha_2}\dots x_s^{\alpha_s}$$
First, suppose the network has a nonempty positive steady-state variety. Then, there must exist a solution where all variables are simultaneously positive. This occurs only when $c_i=0$, in which case we have
$$\kappa_1=-\frac{(\alpha_{i3}-\alpha_{i2})}{(\alpha_{i1}-\alpha_{i0})}\kappa_2$$
Since both rate constants are positive, the stoichiometric coefficients must have opposite signs. Additionally, we must have $c_i=0$ for all $i$, so the ratio between the stoichiometric coefficients must be constant across all species. These two conditions describe a network whose stoichiometric matrix has columns which are negative (nonzero) constant multiples of each other, which is the other condition of Theorem \ref{thm:nonempty_pssv}. Now, suppose the columns of the stoichiometric matrix are negative multiples of each other by some constant $-c$. Then, rate constants following $\kappa_1=c\kappa_2$ will make $c_i=0$ for all $i$. In both cases, proper choices of the $\kappa_i$ make $f_{A_i}=0$ for all $i$ and the positive steady-state variety is $\mathbb{R}_{>0}$.
\end{proof}

\section{2-Species Small Networks} \label{sec:bimolec}

Focusing now on small networks which are indeed at-most-bimolecular, there are 210 non-isomorphic networks, extracted from data available at \cite{NetworkEnumeration} using the code available at \url{https://github.com/elnfarr/Algebraic-Goemetry-of-Small-CRN}. Before classifying the positive steady-state varieties produced by these networks, we require one additional piece of vocabulary:

\begin{definition}
    The \textbf{support} of a complex is the set of all species that appear within it.
\end{definition}
\begin{example}
    The support of the complex $2A+B$ is the set $\{A,B\}$. The support of the complex $3B$ is $\{B\}$.
\end{example}

Among the small networks, four classes of nonempty positive steady-state varieties arose following a systematic calculation of every steady-state variety: non-axis horizontal or vertical lines, slanted lines through the origin, parabolas, and hyperbolas. 

\begin{theorem} Of the 210 non-isomorphic small networks, three networks have a horizontal or vertical line as their positive steady-state variety; seven networks have slanted lines; five networks have parabolas; and and three networks have hyperbolas. 
\end{theorem}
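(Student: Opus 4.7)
The plan is exhaustive enumeration, with Theorem \ref{thm:nonempty_pssv} doing most of the work as a sieve. First I will discard any of the $210$ isomorphism classes whose reactant complexes are identical, or whose stoichiometric matrix does not have columns that are nonzero negative multiples of one another; by Theorem \ref{thm:nonempty_pssv} such networks have empty positive steady-state variety and contribute nothing to the count. Because every at-most-bimolecular complex in two species lies in the fixed six-element set $\{0,\, A,\, B,\, 2A,\, 2B,\, A+B\}$, the remaining networks can be enumerated directly from the dataset in \cite{NetworkEnumeration} using the accompanying code, with the genuineness condition eliminating any pair whose complexes fail to involve both species.

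For each surviving network, the $(\Leftarrow)$ direction of Theorem \ref{thm:nonempty_pssv} shows that the positive steady-state variety is cut out by the single binomial equation
\begin{equation*}
x^{\alpha_1} y^{\beta_1} \;=\; c\, x^{\alpha_2} y^{\beta_2}, \qquad c>0,
\end{equation*}
where $(\alpha_i,\beta_i)$ are the exponent vectors of the two reactant complexes. Cancelling the common factor $x^{\min(\alpha_1,\alpha_2)} y^{\min(\beta_1,\beta_2)}$ removes only coordinate-hyperplane solutions, and since each side comes from a bimolecular complex the resulting reduced equation has exponents bounded by $2$. A short case analysis on the exponent vectors shows that, up to swapping $x$ and $y$, the reduced equation must be one of $x = c$, $x^2 = c$, $xy = c$, $x = cy$, $x^2 = cy^2$, $x^2 = cy$, or $x = cy^2$.

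Each reduced equation defines exactly one of the four curve types: $x^k = c$ is a vertical (or horizontal) line; $xy = c$ is a hyperbola; $x = cy$ and $x^2 = cy^2$ both cut out a slanted line through the origin in the positive orthant (after taking Zariski closure of the positive part); and $x^2 = cy$ or $x = cy^2$ is a parabola. Walking through the finite list of surviving networks and grouping them by the shape of their reduced equation then yields the claimed counts $3$, $7$, $5$, and $3$, for a total of $18$ networks with nonempty positive steady-state variety.

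The main obstacle is purely combinatorial bookkeeping. The enumeration itself is small---only a few dozen candidate reactant pairs, each admitting only finitely many valid product complexes---but one must correctly identify isomorphism classes under the species swap $A \leftrightarrow B$ and the relabeling of the two reactions, and verify for each choice of products that the stoichiometric column condition holds, that both product complexes remain at-most-bimolecular, and that the network stays genuine. The cleanest presentation is therefore as an explicit table of the eighteen surviving networks together with their reduced equations, organized by curve type and cross-referenced against the enumeration in \cite{NetworkEnumeration}.
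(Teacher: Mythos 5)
Your proposal matches the paper's approach: the paper proves this result purely computationally, by enumerating all 210 non-isomorphic small networks (from the dataset of \cite{NetworkEnumeration}) and calculating each steady-state variety, with the case analysis you describe via reduced binomial equations being exactly the classification later formalized in Theorem \ref{thm:bimolec}. Your sieve-then-classify presentation is a slightly more structured write-up of the same finite check, and is correct.
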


This theorem was proved computationally; the list of network calculations is available at the GitHub page linked above. In addition to satisfying the conditions of Theorem \ref{thm:nonempty_pssv}, the networks producing each class of positive steady-state variety share common properties in their reactant complexes. The reactant complex pairs satisfying these properties are non-identical, so by Corollary \ref{cor:2r_rate_indep} the form of the positive steady-state variety is independent of the specific values of the rate constants. The following theorem classifies small networks according to the class of positive steady-state variety they produce.

\begin{theorem} \label{thm:bimolec}
Given a small network satisfying the assumptions of Theorem \ref{thm:nonempty_pssv}, the positive steady-state variety is:
\begin{itemize} 
\item a non-axis horizontal or vertical line if and only if one reactant complex is $A+B$ and the other is unimolecular; or,
\item a line through the origin if and only if 1) The two reactant complexes have the same molecularity, and 2) The supports of the reactant complexes are nonempty and distinct (not necessarily disjoint); or,
\item a parabola if and only if 1) One reactant complex is bimolecular and the other is unimolecular, and 2) The supports of the reactant complexes are disjoint; or,
\item a hyperbola if and only if the reactant complexes are $A+B$ and $0$.
\end{itemize}
\end{theorem}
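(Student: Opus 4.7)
The plan is to reduce the analysis to a single binomial equation and then carry out a finite case analysis on the reactant complexes. Under the hypotheses of Theorem~\ref{thm:nonempty_pssv}, the two columns of the stoichiometric matrix are nonzero negative multiples of each other, so its two rows are proportional and both steady-state polynomials are scalar multiples of a single nontrivial binomial. Writing the reactant complexes as $\alpha A + \beta B$ and $\gamma A + \delta B$, this binomial has the form $c_1 x^{\alpha} y^{\beta} + c_2 x^{\gamma} y^{\delta}$ with $c_1 c_2 < 0$, so on the interior of the positive orthant the steady-state equation is equivalent to
\[
x^{p}\, y^{q} = K, \qquad p := \alpha - \gamma,\quad q := \beta - \delta,
\]
for some $K > 0$. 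The positive steady-state variety is therefore determined entirely by the integer pair $(p,q)$, and the problem reduces to a dictionary between $(p,q)$ and reactant-complex combinatorics.

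Next I would classify the shape by $(p,q)$. If exactly one of $p,q$ is zero the curve is a non-axis horizontal or vertical line; if $p,q$ have opposite signs with $|p|=|q|$ it reduces (after taking $|p|$-th roots in the positive orthant) to $x = K'y$, a line through the origin; if $p,q$ have opposite signs with one absolute value twice the other it becomes $x = K'y^{2}$ or $y = K'x^{2}$, a parabola; and if $p,q$ share sign with $|p|=|q|$ it is the rectangular hyperbola $xy = K''$. The at-most-bimolecular assumption forces $|p|,|q| \le 2$, and the only remaining same-sign possibilities $(p,q) = (\pm 1,\pm 2)$ or $(\pm 2,\pm 1)$ would produce a higher-degree curve, but these force a reactant complex of total molecularity at least three and so cannot occur.

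For the forward direction I would simply read off $(p,q)$ for each listed condition: $\{A+B,A\}$ (resp.\ $\{A+B,B\}$) gives $(0,\pm 1)$ (resp.\ $(\pm 1,0)$); the equal-molecularity pairs with distinct supports $\{A,B\}, \{2A,A+B\}, \{2B,A+B\}, \{2A,2B\}$ give $(\pm 1,\mp 1)$ or $(\pm 2,\mp 2)$; the unimolecular-bimolecular disjoint-support pairs $\{A,2B\}, \{B,2A\}$ give $(\pm 1,\mp 2)$ or $(\pm 2,\mp 1)$; and $\{A+B,0\}$ gives $(\pm 1,\pm 1)$. In each case the shape predicted by the dictionary above matches the statement of the theorem.

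The main obstacle is the reverse direction, since several other reactant pairs would algebraically yield one of the four listed shapes and must nevertheless be excluded. For example $\{0,A\}$ and $\{A,2A\}$ both give $(\pm 1,0)$, and $\{0,2A\}$ gives $(\pm 2,0)$, all of which would be vertical lines. The resolution is to observe that such pairs cannot occur in any \emph{genuine} small network satisfying Theorem~\ref{thm:nonempty_pssv}: if both reactant complexes have one-species support $\{A\}$ (or both have support $\{B\}$), the negative-multiple condition forces the opposite-species entries of both stoichiometric columns to vanish, and nonnegativity and integrality of the product complexes then force the absent species never to appear anywhere in the network, contradicting genuineness. This eliminates $\{0,A\}, \{0,B\}, \{0,2A\}, \{0,2B\}, \{A,2A\}, \{B,2B\}$. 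The remaining work is bookkeeping: enumerate the $\binom{6}{2}=15$ distinct pairs of at-most-bimolecular complexes, discard the six non-genuine ones, and verify that the nine survivors fall into exactly the four cases of the theorem.
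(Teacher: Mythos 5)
Your proposal is correct, and it is organized differently enough from the paper's proof to be worth comparing. The paper argues the forward direction shape-by-shape: assuming the positive steady-state variety is, say, a line, it factors the defining binomial, bounds the degree of the cofactor, enumerates the handful of possible polynomial forms, discards the non-genuine ones, and reads off the reactant complexes; the backward direction is then a separate computation of $f_A$ for each listed condition. You instead put the single governing binomial into the normal form $x^p y^q = K$ with $K>0$ on the positive orthant, build a dictionary from the exponent vector $(p,q)=(\alpha-\gamma,\beta-\delta)$ to the curve type, and run one exhaustive pass over the $\binom{6}{2}=15$ reactant pairs, using the same genuineness-plus-negative-column argument as the paper to discard the six pairs whose reactants omit a species. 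Both proofs rest on the same two pillars (proportionality of the steady-state polynomials and the genuineness exclusion), but your single enumeration proves all eight implications at once and, as a byproduct, re-derives without computation the fact that these four shapes exhaust all possibilities for small networks satisfying Theorem~\ref{thm:nonempty_pssv} --- something the paper establishes separately by machine computation. The trade-off is that the paper's shape-driven argument makes explicit why no other factorization of the defining polynomial is compatible with a given variety, whereas your version buries that in the $(p,q)$ dictionary; you should make sure the dictionary also disposes of the a priori possible same-sign pairs $(\pm 2,\pm 2)$ (they force molecularity at least four) alongside the $(\pm 1,\pm 2)$ cases you already exclude, but this is a one-line addition and not a gap.
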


\begin{proof}
($\Rightarrow$) First, consider the two linear classes of positive steady-state variety. Suppose we have an arbitrary small network whose positive steady-state variety is a non-axis horizontal or vertical line, or a line through the origin. Up to relabeling species, the simplified positive steady-state varieties are $V(x-c)$ and $V(y-cx)$ respectively for some positive $c\in\mathbb{R}$, where $V(f(x))$ denotes the variety generated by $f(x)=0$. Since the positive steady-state variety is a component of the steady-state variety, the equations defining the steady-state variety must have $(x-c)$ or $(y-cx)$ as a factor, but may also contain another factor term whose degree is limited by the network's bimolecularity:
$$(x-c)\cdot(\text{Factor of degree 1 or 0}), \hspace{1cm} (y-cx)\cdot(\text{Factor of degree 1 or 0})$$
Since the network has two reactions the corresponding equations have at most two terms, limiting the degree-one factors to be monomials or $(x+c)$ and $(y+cx)$ respectively, and there are four potential equations defining the steady-state variety in each case. We also have that the steady-state variety equals $V(f_A)\cap V(f_B)$, so the same logic applies to the individual steady-state equations. As in Theorem \ref{thm:nonempty_pssv} the two steady-state equations are multiples of each other and take on the same form. So, up to multiplication by constants, the steady-state polynomials in each case take on one of four possible forms. 

Consider first the horizontal or vertical line class of networks. The possible forms of the steady-state polynomials are as follows:
$$f_i=x-c\text{ ; }f_i=x^2-c^2\text{ ; }f_i=x^2-cx\text{ ; }f_i=xy-cy$$
By Theorem \ref{thm:nonempty_pssv}, the columns of the network's stoichiometric matrix are negative multiples of each other. This restriction necessitates that the first three forms of equations correspond to non-genuine networks: $A \longleftrightarrow 0$, $2A\longleftrightarrow 0$, and $2A \longleftrightarrow A$ respectively, as any copies of $B$ in the network must be in the product complexes which can only result in positive entries of the stoichiometric matrix. Thus, the steady-state polynomials only take the following form, up to multiplication by constants:
$$f_i=xy-cy$$ 
The monomial terms must therefore have variable parts of $xy$ and $y$. By the definition of the steady-state polynomials, these correspond to reactant complexes of $A+B$ and $B$. With arbitrary labeling of species, the reactant complexes are therefore $A+B$ and a unimolecular complex. 

Now, consider the slanted line class of networks. The possible forms of the steady-state polynomials are as follows:
$$f_i=y-cx\text{ ; }f_i=y^2-c^2x^2\text{ ; }f_i=y^2-cxy\text{ ; }f_i=xy-cx^2$$
These correspond to reactant complex pairs of $A$ and $B$; $2A$ and $2B$; $A+B$ and $2B$; or $A+B$ and $2A$ respectively. In all pairs the molecularities of the reactant complexes are the same, and with arbitrary labeling of species the complexes have distinct (not necessarily disjoint) supports, proving both conditions.

Next, consider the two quadratic variety classes. Suppose we have an arbitrary small network whose positive steady-state variety is a parabola or a hyperbola. Up to relabeling species, the simplified positive steady-state varieties are $V(y-cx^2)$ and $V(xy-c)$ for some positive $c\in\mathbb{R}$. The network is at-most-bimolecular, which implies that each steady-state polynomial has degree at most two. Following similar logic to the linear classes, the steady-state polynomials take on the following forms up to multiplication by constants:
$$f_i=y-cx^2\text{ for parabolas;}\hspace{0.5cm} f_i=xy-c\text{ for hyperbolas.}$$
As with the linear variety classes, the variable components of the monomial terms give that for a parabola the reactant complexes are $B$ and $2A$; with arbitrary labeling of species, these are a bimolecular and a unimolecular complex with disjoint supports, proving both conditions. Similarly, for a hyperbola the corresponding reactant complexes are $A+B$ and $0$, as desired.

($\Leftarrow$) 
For all four variety classes, the assumptions of Theorem \ref{thm:nonempty_pssv} guarantee that we need only examine one steady-state equation to completely understand the steady-state variety, since the two equations are constant multiples of each other. We consider without loss of generality a nonzero $f_A$, and proceed through each class to show that if a network meets the above conditions on the reactant complexes, the positive steady-state variety has the proposed form.

First, suppose we have a small network where one reactant complex is $A+B$ while the other is, without loss of generality, $A$. Constructing $f_A$, setting equal to zero, and solving gives
$$f_{A} = (\alpha_1-1)\kappa_1 {x}{y} + (1-\alpha_1)\kappa_2 x \hspace{0.3cm} \Longrightarrow \hspace{0.3cm} x = 0\text{, }y = -\frac{(\alpha_3-1)\kappa_2}{(1-\alpha_1)\kappa_1}$$
By assumption the stoichiometric coefficients have opposite signs, so the fixed value of $y$ is positive and therefore defines a non-axis horizontal line crossing through the first quadrant. With arbitrary labeling of species, the positive steady-state variety is a horizontal or vertical line.

Now suppose we have a small network where both reactant complexes have the same molecularity and their supports are distinct. Up to relabeling species, there are three cases to consider:
\begin{enumerate}
    \item The reactant complexes are $A$ and $B$;
    \item The reactant complexes are $2A$ and $2B$;
    \item The reactant complexes are $2A$ and $A+B$.
\end{enumerate}
These give rise to the following:
\begin{align*}
     f_{A1}=\kappa_1(\alpha_1-1)x+\kappa_2(\alpha_3)y \hspace{1cm} &\Longrightarrow \hspace{1cm} x=-\frac{\kappa_2\alpha_3}{\kappa_1(\alpha_1-1)}y\\    
     f_{A2}=\kappa_1(\alpha_1-2)x^2+\kappa_2(\alpha_3)y^2 \hspace{1cm} &\Longrightarrow \hspace{1cm} x=\pm\sqrt{\frac{-\kappa_2\alpha_3}{\kappa_1(\alpha_1-2)}}y\\    
     f_{A3}=\kappa_1(\alpha_1-2)x^2+\kappa_2(\alpha_3-1)xy \hspace{1cm} &\Longrightarrow \hspace{1cm} x=0, \hspace{0.3cm} x=-\frac{\kappa_2(\alpha_3-1)}{\kappa_1(\alpha_1-2)}y
\end{align*}

As above, the fractional coefficients are positive and in all cases the equation defines a line with positive slope through the origin. The second and third cases define additional lines, but these do not intersect the interior of the positive orthant. Thus, the positive steady-state variety is a line through the origin.

Next suppose that the reactant complexes have disjoint supports with one bimolecular and one unimolecular complex, and without loss of generality suppose the reactant complexes are $A$ and $2B$. Then,
$$f_A=\kappa_1(\alpha_1-1)x+\kappa_2(\alpha_3)y^2\hspace{0.3cm} \Longrightarrow \hspace{0.3cm}x=-\frac{\kappa_2(\alpha_3)}{\kappa_1(\alpha_1-1)}y^2$$
Again, the coefficient on $y^2$ is positive by assumption and the equation defines a parabola crossing into the first quadrant; the positive steady-state variety is a parabola.

Finally, suppose a small network has reactant complexes of $A+B$ and $0$. Then,
$$f_B = \kappa_1(\beta_1-1)xy + \kappa_2(\beta_3)\hspace{0.3cm} \Longrightarrow \hspace{0.3cm}xy=-\frac{\kappa_2(\beta_3)}{\kappa_1(\beta_1-1)}$$
As in the other three cases, the constant value of $xy$ is positive and this equation defines a hyperbola in the first and third quadrants; the positive steady-state variety is a hyperbola.
\end{proof}

\begin{remark} Some networks described by the above theorem exhibit absolute concentration robustness (ACR), a network property first introduced by Shinar and Feinberg in 2010 \cite{ShinarFeinberg2010}. A biochemical system shows ACR if, for a given species, its concentration is the same in all positive steady-states of the system. For networks whose positive steady-state variety is a horizontal or vertical line, one species has a constant concentration throughout the variety and thus shows ACR in that species. No other variety class described above has this property. Theorem \ref{thm:bimolec} therefore completely describes all small networks which exhibit ACR. \end{remark}

\section{Degree 3 Varieties} \label{sec:semicubical}

When expanding our focus to include at-most-trimolecular 2-species small networks, two new classes of positive steady-state variety arise. Unlike the at-most-bimolecular case, the following theorems do not arise from a systematic analysis of all possible networks so the exact number of networks with each class of variety is unknown. The shape of the first class of variety is formally known as a semicubical parabola and consists of equations of the form $y^2=cx^3$. Informally, it looks like a cartoon bird flying through the 2D plane (Figure \ref{fig:semicubic_ex}).

\begin{example} Consider the network and corresponding steady-state polynomials and variety in Figure \ref{fig:semicubic_ex}. Here, $\kappa_{1} = \kappa_{2} = 1$ with $x_{A}$ as the horizontal axis and $x_{B}$ as the vertical axis. The steady-state variety is a semicubial parabola (shown in red).

\begin{figure}[ht]
    \begin{subfigure}{0.3\textwidth}
    \centering
    \begin{tikzcd} [row sep = tiny]
    3B \ar[r,"\kappa_1"] & A + 2B \\
    2A \ar[r,"\kappa_2"] & A + B
    \end{tikzcd}
    \begin{align*}
    f_{A} & = \kappa_{1}x_{B}^{3} -\kappa_{2}x_{A}^{2} \\
    f_{B} & = -\kappa_{1}x_{B}^{3} + \kappa_{2}x_{A}^{2}
    \end{align*}
    \end{subfigure}
    \begin{subfigure}{0.3\textwidth}\ContinuedFloat
    \centering
    \includegraphics[scale=0.37]{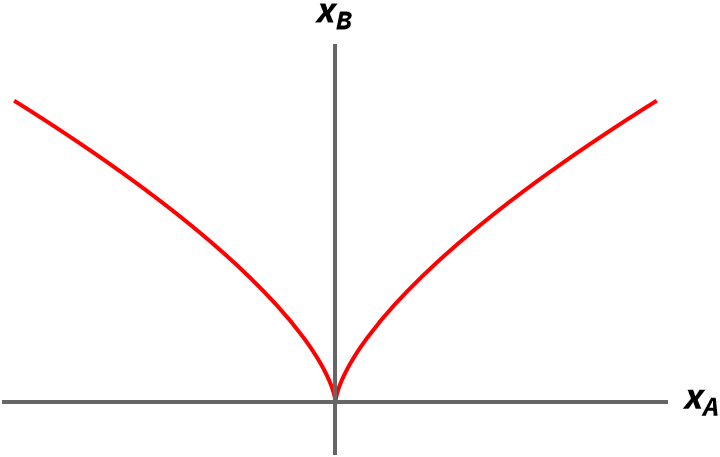}
    \end{subfigure}
    \caption{Semicubical parabola positive steady-state variety}
    \label{fig:semicubic_ex}
    \begin{subfigure}{0.3\textwidth}
    \text{ }
\end{subfigure}
\end{figure}
\end{example}

The second additional positive steady-state variety class is a cubic, or any equation of the form $y=cx^3$. The following theorem classifies all at-most-trimolecular small networks which produce each type of variety.

\begin{theorem}\label{thm:trimolec}
    Given an at-most-trimolecular small network satisfying the assumptions of Theorem \ref{thm:nonempty_pssv}, the positive steady-state variety is: 
    \begin{itemize}
        \item a semicubical parabola if and only if 1) One reactant complex is bimolecular and the other is trimolecular, and 2) The supports of the reactant complexes are disjoint;
        \item a cubic if and only if 1) One reactant complex is trimolecular and the other is unimolecular, and 2) The supports of the reactant complexes are disjoint.
    \end{itemize}
\end{theorem}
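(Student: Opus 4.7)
The plan is to mirror the structure of the proof of Theorem \ref{thm:bimolec}. Under the hypotheses of Theorem \ref{thm:nonempty_pssv}, the two steady-state polynomials are nonzero constant multiples of each other, so I may work with a single binomial $f_A = c_1 x^{\alpha} y^{\beta} + c_2 x^{\gamma} y^{\delta}$, where the reactant complexes are $\alpha A + \beta B$ and $\gamma A + \delta B$ with $\alpha, \beta, \gamma, \delta \in \mathbb{Z}_{\geq 0}$, trimolecularity giving $\alpha + \beta \leq 3$ and $\gamma + \delta \leq 3$, and $c_1, c_2$ of opposite signs. Factoring out the monomial $x^{\min(\alpha,\gamma)} y^{\min(\beta,\delta)}$ (which contributes only coordinate-axis solutions, never positive ones) expresses the positive steady-state variety as the zero set of an irreducible binomial $m_1' - C m_2'$ with $\gcd(m_1', m_2') = 1$ and $C > 0$. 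This reduction is the key bookkeeping step and reuses the machinery already deployed in Theorem \ref{thm:bimolec}.

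\textbf{Forward direction.} For the positive steady-state variety to equal a semicubical parabola $V(y^2 - c x^3)$ (or its $x \leftrightarrow y$ swap), the reduced binomial must, up to scaling, equal $y^2 - c x^3$. This pins down $\{m_1', m_2'\} = \{y^2, x^3\}$, so up to swapping the two reactants the original exponent vectors have the form $(\alpha, \beta) = (a, b+2)$ and $(\gamma, \delta) = (a+3, b)$ for some $a, b \in \mathbb{Z}_{\geq 0}$. The trimolecularity bound then forces $a + b + 3 \leq 3$, hence $a = b = 0$, leaving only the pair $\{2B, 3A\}$ (with the species-swapped $\{2A, 3B\}$). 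Both configurations consist of one bimolecular and one trimolecular reactant with disjoint supports, as claimed. The cubic case is analogous: equality with $V(y - c x^3)$ forces exponent vectors $(\alpha, \beta) = (a, b+1)$ and $(\gamma, \delta) = (a+3, b)$, and trimolecularity again collapses to $a = b = 0$, giving exactly $\{B, 3A\}$ or $\{A, 3B\}$.

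\textbf{Backward direction.} For each of the four reactant pairs isolated above I write $f_A$ out explicitly and check the claimed variety by inspection. For example, reactants $3A$ and $2B$ give $f_A = c_1 x^3 + c_2 y^2$ with $c_1 c_2 < 0$ guaranteed by Theorem \ref{thm:nonempty_pssv}, so solving $f_A = 0$ yields $y^2 = -(c_1/c_2) x^3$ with $-c_1/c_2 > 0$, a semicubical parabola meeting the open first quadrant. The $\{2A, 3B\}$ case is the species-swapped version, and reactants $\{3A, B\}$ and $\{A, 3B\}$ give $y = c' x^3$ and $x = c' y^3$ respectively with $c' > 0$, completing the backward direction with one-line computations in each case.

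\textbf{Main obstacle.} The only delicate point is ensuring exhaustiveness in the forward direction: a priori the two reactant monomials might share variables, and one must confirm that any such overlap can only inflate exponents beyond the trimolecular bound. The monomial-factorization argument handles this cleanly, since any shared factor inflates the total degree of both reactants equally, and the exponent gap $\{2,3\}$ (resp.\ $\{1,3\}$) in the reduced binomial leaves no room for additional common variables once molecularity $\leq 3$ is imposed. This step is really the only nontrivial bookkeeping; everything else is a direct generalization of the at-most-bimolecular analysis.
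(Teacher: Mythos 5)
Your proposal is correct and follows essentially the same route as the paper: reduce to a single binomial steady-state polynomial, match its reduced form against $y^2-cx^3$ or $y-cx^3$ in the forward direction, and verify the four reactant pairs by direct computation in the backward direction. Your explicit monomial-GCD factoring argument showing that the trimolecularity bound leaves no room for shared variables is a welcome tightening of a step the paper only gestures at with ``following similar logic to Theorem \ref{thm:bimolec},'' but it is the same underlying idea.
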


\begin{proof}

($\Rightarrow$) Suppose we have an arbitrary at-most-trimolecular small network whose positive steady-state variety is a semicubical parabola or a cubic. Up to relabeling species, the simplified positive steady-state varieties are $V(x^3-cy^2)$ and $V(y-cx^3)$ respectively, for some positive $c\in\mathbb{R}$. Following similar logic to Theorem \ref{thm:bimolec}, the steady-state polynomials take on the following forms up to multiplication by constants:
$$f_i=x^3-cy^2\text{ for semicubical parabolas;}\hspace{0.5cm} f_i=y-cx^3\text{ for cubics.}$$
As with the four at-most-bimolecular variety classes, the variable components of the monomial terms give that for a semicubical parabola the reactant complexes are $2B$ and $3A$; with arbitrary labeling of species, these are a bimolecular and a trimolecular complex with disjoint supports, proving both conditions. Similarly, for a cubic the corresponding reactant complexes are $3A$ and $B$; with arbitrary labeling of species these are a trimolecular and unimolecular complex with disjoint supports, again proving both conditions.

($\Leftarrow$) 
As in Theorem \ref{thm:bimolec}, we need only consider one steady-state equation to understand the steady-state variety, and consider without loss of generality a nonzero $f_A$. 

First suppose the network has one bimolecular and one trimolecular reactant complex with disjoint supports, and without loss of generality suppose the reactant complexes are $3A$ and $2B$. Then,
$$f_A=\kappa_1(\alpha_1-3)x^3+\kappa_2(\alpha_3)y^2 \hspace{0.3cm} \Longrightarrow \hspace{0.3cm} x^3=-\frac{\kappa_2(\alpha_3)}{\kappa_1(\alpha_1-3)}y^2$$
The stoichiometric coefficients have opposite signs by assumption meaning the coefficient on $y^2$ is positive, and the equation defines a semicubical parabola crossing into the first quadrant. The positive steady-state variety is therefore a semicubical parabola.

Next, Suppose that the reactant complexes have disjoint supports with one trimolecular and the other unimolecular, and without loss of generality suppose that the reactant complexes are $A$ and $3B$. Then,
$$f_A=\kappa_1(\alpha_1-1)x+\kappa_2(\alpha_3)y^3 \hspace{0.3cm} \Longrightarrow \hspace{0.3cm} x=-\frac{\kappa_2(\alpha_3)}{\kappa_1(\alpha_1-1)}y^3$$
The coefficient on $y^3$ is positive by assumption and the equation defines a cubic crossing into the first quadrant; the positive steady-state variety is a cubic.
\end{proof}

While the degree-three varieties are new in the at-most-trimolecular case, the other four classes of positive steady-state variety discussed above can certainly be produced here as well. However, the additional flexibility provided by having equations with degree at most three complicates the necessary conditions and increases the number of possible cases. The relevant theorems as stated in Section \ref{sec:bimolec} are therefore not sufficient for the at-most-trimolecular case; their potential generalization is discussed in Section \ref{sec:discussion}.

\begin{remark} \label{rmk:product_complexes}
    Since the classification theorems in Sections \ref{sec:bimolec} and \ref{sec:semicubical} only directly impose conditions upon the reactant complexes, they also classify the positive steady-state varieties of networks with product complexes of higher molecularities, provided that they satisfy the assumptions of Theorem \ref{thm:nonempty_pssv}. For example, the positive steady-state variety of the network comprised of $2A\rightarrow 3A+B$ and $A+B\rightarrow 0$ is a line through the origin by Theorem \ref{thm:bimolec}. 
\end{remark}

\section{Reactant Complex Overlap} \label{sec:overlap}

The theorems in Sections \ref{sec:bimolec} and \ref{sec:semicubical} rely heavily on the molecularity bounds on the complexes to classify the positive steady-state varieties. While most reactions appearing in nature are indeed bimolecular, the mathematical study of reaction networks is not limited by the same constraints. With the goal of generalizing the 2-species classification theorems to small networks with complexes of higher molecularity, we return to some more general properties of networks.
A direct result of the assumption of mass action kinetics is the connection between a network's reactant complexes and its corresponding steady-state equations' monomial terms; this relationship is fundamental to all previous proofs in this paper. Consequently, shared species among a network's reactant complexes correspond to coordinate hyperplane components of the variety. The species shared by a set of complexes are the intersection of their \textit{multiset supports}, where the multiset support of a complex is a list of all copies of each species appearing in a complex. For example, the multiset support of $2A+B$ is $\{A,A,B\}$ while its support is simply $\{A,B\}$.

\begin{prop} \label{prop:reactant_overlap}
    A network of any size will have shared species among its reactant complexes if and only if its variety has coordinate hyperplane components whose order corresponds to the multiplicity of the shared corresponding species.
\end{prop}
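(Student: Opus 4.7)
The plan rests on the direct bijection between reactant complexes and the variable parts of monomial terms in the steady-state polynomials, stemming from the mass-action formula $f_k = \sum_i N_{k,i}\,\kappa_i\, x^{b_i}$. Each reaction with reactant complex $y_i = \sum_\ell \alpha_{\ell i} A_\ell$ contributes to every steady-state polynomial $f_k$ a single monomial term whose variable part is $x^{b_i} = \prod_\ell x_\ell^{\alpha_{\ell i}}$; the coefficient depends on the stoichiometric entry $N_{k,i}$ and the rate constant $\kappa_i$, but the exponent vector depends only on the reactant complex. This means divisibility questions about the $f_k$ translate directly into combinatorial questions about the reactant exponents $\alpha_{\ell i}$.

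For the forward direction, I would suppose species $A_\ell$ appears in every reactant complex, and set $m = \min_i \alpha_{\ell i} \geq 1$. Then $x_\ell^m$ divides every monomial $x^{b_i}$, hence factors uniformly out of every $f_k$: write $f_k = x_\ell^m \tilde f_k$. The steady-state variety therefore contains $V(x_\ell)$ as an irreducible component. By the definition of $m$ as a minimum, at least one reaction $i^\ast$ has $\alpha_{\ell i^\ast} = m$, so $\tilde f_k$ has a monomial free of $x_\ell$ (for generic rate constants, where no accidental cancellation removes this term); thus $m$ is the largest power of $x_\ell$ common to all $f_k$. Reading this off of the primary decomposition of the steady-state ideal yields a coordinate hyperplane component whose order is exactly $m$.

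For the converse, suppose $V(x_\ell)$ occurs as a component of the steady-state variety with order $m$. Passing through the primary decomposition at the prime $(x_\ell)$, one sees that $x_\ell^m$ divides every generator $f_k$. Because each $f_k$ is a sum of monomials whose exponent vectors are determined by distinct reactant complexes and whose coefficients involve the generic parameters $\kappa_i$, no cancellation among these terms is possible generically; therefore $x_\ell^m$ must divide every monomial $x^{b_i}$ contributing with nonzero coefficient. This forces $\alpha_{\ell i} \geq m$ for every reaction $i$, with equality for some $i$ by maximality of $m$, so $A_\ell$ is shared among all reactant complexes with minimum multiplicity exactly $m$.

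The main obstacle will be making precise what "order" of a coordinate hyperplane component means for potentially non-radical steady-state ideals, and verifying that generic choices of rate constants rule out accidental enhancements of the $x_\ell$-adic valuation of the $f_k$ beyond what the reactant exponents force. The cleanest workaround is to phrase the entire argument at the level of polynomial factorization, defining the "order" as the largest integer $m$ such that $x_\ell^m$ is a common factor of the generators $f_k$ of the steady-state ideal, and only at the end translating this into the geometric statement about the primary decomposition of the variety.
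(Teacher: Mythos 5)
Your proposal is correct and follows essentially the same route as the paper's proof: factor the common power of $x_\ell$ determined by the shared reactant species out of every steady-state polynomial to exhibit the coordinate hyperplane component, and for the converse observe that a hyperplane component of order $m$ forces $x_\ell^m$ to divide every monomial term and hence every reactant complex to contain $mA_\ell$. Your added care in pinning down the order as the largest common power (via the minimality of $m$ and genericity of the $\kappa_i$) is a mild refinement the paper leaves implicit, but the argument is the same.
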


\begin{proof}    
Suppose all reactant complexes of a network with $s$ species and $n$ reactions share $\alpha_{i}A_i$, so that $\alpha_{ij} \geq \alpha_{i} > 0$ for all $j$. For the $m^{\text{th}}$ term of the $j^{\text{th}}$ steady-state polynomial, the stoichiometric coefficient will be  $(\alpha_{j(2m-1)}-\alpha_{j(2m-2)})$ and the exponent on each $x_k$ will be $\alpha_{k(2m-2)}$. 
Combining this with the general form steady-state polynomials yields the following after factoring for any $A_j$:
\begin{align*} 
f_{A_j} =  x_i^{\alpha_{i}}\big[(\alpha_{j1} - \alpha_{j0})\kappa_1 x_1^{\alpha_{10}}&\cdots x_{i}^{\alpha_{i0}-\alpha_i} \cdots x_s^{\alpha_{s0}} + \dotsm\\ 
& \dotsm + (\alpha_{j(2n-1)} - \alpha_{j(2n-2)})\kappa_n x_1^{\alpha_{1(2n-2)}}\cdots x_{i}^{\alpha_{i(2n-2)}-\alpha_{i}}\cdots x_s^{\alpha_{s(2n-2)}} \big]\\
\end{align*}
\noindent Defining the $n$-term polynomial within the brackets for $f_{A_j}$ as $P_j(x)$, the steady-state variety then becomes
$$\left[V(x^{\alpha_{i}}) \cup V(P_1(x))\right] \cap\cdots\cap \left[V(x^{\alpha_{i}}) \cup V(P_s(x))\right]= V(x^{\alpha_{i}}) \cup \left[V(P_1(x)) \cap\cdots\cap V(P_s(x))\right]$$
This variety is composed of a coordinate hyperplane solution when $x_i=0$ of order $\alpha_{i}$, along with the intersection of the zero sets of the remaining polynomials.

Now, suppose a network has a coordinate hyperplane of order $m$ as a component its steady-state variety, and suppose the component is $V(x_i^m)$. Then, $x_i^m=0$ must be a solution to the steady-state system, and therefore must be a shared factor among all monomial terms. Each monomial term is built from the reactant complexes, and so every reactant complex must contain $mA_i$.
\end{proof}

\begin{example}

Consider the network in Figure \ref{fig:overlap} along with its steady-state polynomials and variety, here with $\kappa_1=\kappa_2=1$ and the $x_B$ axis as the vertical axis. The shared species of the reactant complexes, here $\{A,A,B\}$, result in both axes as components of the steady-state variety (shown in red). The $x_B$ axis has multiplicity two due to the two shared copies of $A$.

\begin{figure}[ht]
    \begin{subfigure}{0.06\textwidth}
    \text{ }
    \end{subfigure}
    \begin{subfigure}{0.55\textwidth}
    \centering
    \begin{tikzcd}
     3A+B \ar[r,"\kappa_1", shift left] & 2A+3B \ar[l, "\kappa_2", shift left]
    \end{tikzcd}
    \medskip
    \begin{align*}
    f_{A} & = -\kappa_{1}x_{A}^{3}x_B +\kappa_{2}x_A^2x_{B}^{3}=x_A^2x_B(-\kappa_1x_A+\kappa_2x_B^2) \\
    f_{B} & = \kappa_{1}x_{A}^{3}x_B - \kappa_{2}x_A^2x_{B}^{3}=x_A^2x_B(\kappa_1x_A-\kappa_2x_B^2)
    \end{align*}
    \medskip
    \end{subfigure}
    \begin{subfigure}{0.3\textwidth}
    \centering
    \includegraphics[scale=0.33]{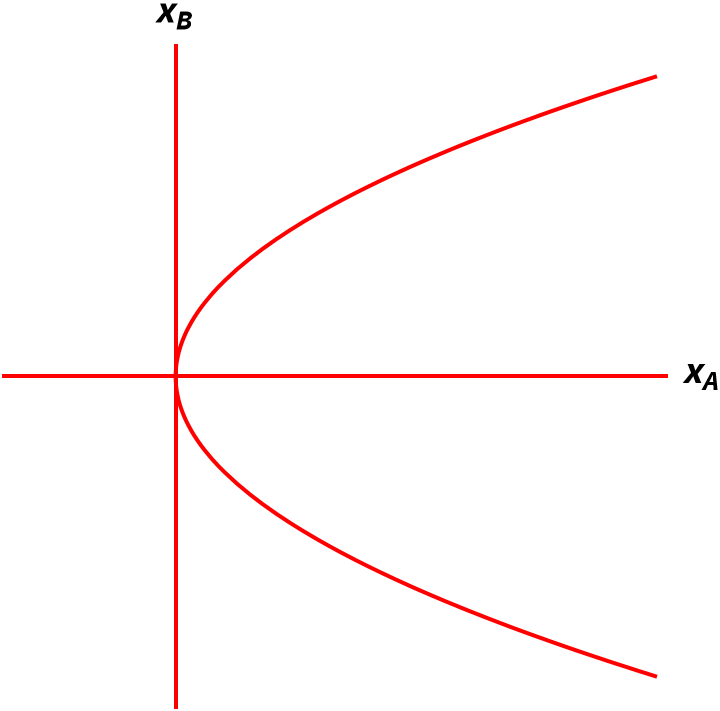}
    \end{subfigure}
    \begin{subfigure}{0.06\textwidth}
    \text{ }
    \end{subfigure}
    \caption{Reactant complex overlap and axis components}
    \label{fig:overlap}

\end{figure}
\end{example}

It follows immediately that two distinct networks have the potential to produce steady-state varieties whose non-coordinate-hyperplane components are identical if the non-shared components of their reactant complexes are the same; however, differences in the product complexes could alter the variety by changing the stoichiometric coefficients. To identify some cases where distinct networks can produce the same variety, we consider actions under which a network's non-coordinate-hyperplane steady-state variety is invariant.
Observe that the network in Figure \ref{fig:overlap} has the same non-axis steady-state variety as the example network in Section \ref{sec:CRN}, and is indeed the same network with $2A+B$ added to both reactant complexes. The following property allows the construction of higher-molecularity networks within a given variety family by building off of a smaller network.

\begin{corollary}\label{cor:extensions}
A network's non-coordinate-hyperplane steady-state variety is invariant under the operation of adding a linear combination of species to all complexes. 
\end{corollary}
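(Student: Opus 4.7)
The plan is to show that adding a linear combination $\gamma = \sum_k \gamma_k A_k$ (with $\gamma_k \in \mathbb{Z}_{\geq 0}$) to every complex of a network has the single effect of multiplying every steady-state polynomial by the common monomial $x^\gamma$, so the only new components introduced are coordinate hyperplanes.

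First I would observe that the stoichiometric matrix $N$ is unchanged: its entries are differences $\alpha_{kj} - \alpha_{ki}$ of product and reactant coefficients within a single reaction, so shifting both endpoints of every arrow by $\gamma$ cancels in each difference. Second, I would track the reactant matrix $B$: adding $\gamma$ to every reactant complex adds the column vector $(\gamma_1, \ldots, \gamma_s)^T$ to every column of $B$, whence each entry of the monomial vector $x^B$ is replaced by $x^{b_i + \gamma} = x^\gamma \cdot x^{b_i}$. Plugging this into Equation \eqref{mass-action-system} yields
$$f^{\text{new}}_{A_k}(x) = x^\gamma \cdot f^{\text{old}}_{A_k}(x)$$
for every species $A_k$, since the common factor $x^\gamma$ can be pulled out of the matrix product $N \cdot \text{diag}(\kappa) \cdot x^B$.

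The variety computation that follows is routine: distributing $V$ over a product and then intersecting across species gives
$$V^{\text{new}} = \bigcap_{k=1}^{s} V\!\left(x^\gamma f^{\text{old}}_{A_k}\right) = V(x^\gamma) \cup V^{\text{old}},$$
and $V(x^\gamma) = V(x_1^{\gamma_1} \cdots x_s^{\gamma_s})$ is a union of the coordinate hyperplanes $\{x_i = 0\}$ indexed by those $i$ with $\gamma_i > 0$. Since all newly introduced components lie in coordinate hyperplanes, the non-coordinate-hyperplane components of $V^{\text{new}}$ and $V^{\text{old}}$ coincide, proving the corollary.

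I do not anticipate a genuine obstacle here. The one conceptual point worth emphasizing is the asymmetry between $N$ (unaffected by the shift because its entries are differences) and $B$ (shifted uniformly, producing the single monomial factor $x^\gamma$); once that is isolated, Proposition \ref{prop:reactant_overlap} already tells us how to interpret the extra $V(x^\gamma)$ as purely coordinate-hyperplane noise. The only bookkeeping remark I would include is that $\gamma$ must have nonnegative integer entries so that the shifted complexes remain valid in the sense of Section \ref{sec:CRN}.
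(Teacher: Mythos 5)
Your proposal is correct and follows essentially the same route as the paper's proof: both arguments factor the common monomial $x^\gamma$ out of every steady-state polynomial (leaving the original polynomials intact inside the bracket) and then observe, as in Proposition \ref{prop:reactant_overlap}, that the extra factor contributes only coordinate-hyperplane components. Your framing via the invariance of $N$ and the uniform shift of the reactant matrix $B$ is a slightly cleaner way to see why the factorization $f^{\text{new}} = x^\gamma f^{\text{old}}$ holds, but the substance is identical.
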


\begin{proof}
Consider the transformation of adding $m_iA_i$ to all complexes of a network, where the $m_i$ are nonnegative integers and the following is the $i^{\text{th}}$ reaction:
$$\begin{tikzcd} [row sep=tiny]
(\alpha_{1(2i-2)} + m_1) A_1 +\dotsm + (\alpha_{s(2i-2)} + m_s) A_s \ar[r, "\kappa_i"] & (\alpha_{1(2i-1)} + m_1) A_1 +\cdots + (\alpha_{s(2i-1)} + m_s) A_s\\
\end{tikzcd}$$
The corresponding steady-state polynomial for each $A_i$ after simplification is
$$f_{A_i}=x_1^{m_1}\cdots x_s^{m_s}\left[ (\alpha_{i1}-\alpha_{i0})\kappa_1 x_1^{\alpha_{10}}\cdots x_s^{\alpha_{s0}}+\cdots+(\alpha_{i(2n-1)}-\alpha_{i(2n-2)})\kappa_n x_1^{\alpha_{1(2n-2)}}\cdots x_s^{\alpha_{s(2n-2)}} \right]$$
The added species are shared by the reactant complexes, so by Proposition \ref{prop:reactant_overlap} these steady-state equations formulate the same non-axis variety as the network before alteration, as the additional species only add coordinate hyperplane solutions and the $P_j(x)$ are unaltered.
\end{proof}

\begin{remark}
The operation described in Corollary \ref{cor:extensions} is very similar to the network ``translation'' operation studied in    \cite[\S 3]{MST2021ACR}.
\end{remark}

\begin{remark}\label{rmk:extension} Using the ideas introduced above, we can consider the \textbf{reduced reactant complexes} of a network, which are the reactant complexes with all shared species removed. In terms of multiset supports, the reduced reactant complexes are the reactant complexes minus the intersection of their multiset supports. By Proposition \ref{prop:reactant_overlap}, the reduced reactant complexes contain all necessary information about the non-coordinate-hyperplane components of the steady-state variety. So, the theorems in Sections \ref{sec:bimolec} and \ref{sec:semicubical} can be extended to networks with complexes of higher molecularity in a limited sense. If the reduced reactant complexes of a 2-species, 2-reaction network meets the conditions of any of the classification theorems, then it will produce the indicated class of variety. However, the converse is not true in general, discussed below.
\end{remark}

\section{Discussion}\label{sec:discussion}

As mentioned in the introduction, calculating the positive steady-state variety is difficult in general and existing tools are limited in their scope. With Theorem \ref{thm:nonempty_pssv}, the case of 2-reaction networks is fully described. This paper also classified 2-species at-most-bimolecular networks by the shape of their positive steady-state variety (Section \ref{sec:bimolec}). The reactant complex conditions are verifiable by inspection and the construction of the stoichiometric matrix is straightforward, providing a significant simplification of the conditions given in \cite{MDSC2012toric} to identify networks with nonempty toric positive steady-state varieties. Finally, we gave some potential tools to expand the classification theorems of Sections \ref{sec:bimolec} and \ref{sec:semicubical} to networks with complexes of higher molecularity (Section \ref{sec:overlap}). 

Attempting to extend the above classification theorems to higher-molecularity networks raises some interesting questions. As mentioned in Remark \ref{rmk:extension}, the classification theorems in this paper give a partial description of higher-molecularity networks when considering the reduced reactant complexes; however, a complete classification would require a different approach. As an example, consider a parabolic steady-state variety. In the at-most-bimolecular case, this class of variety can only be generated by $y-cx^2$ as seen in the proof of Theorem \ref{thm:bimolec}. Following similar logic and using the framework of reduced reactant complexes introduced in Remark \ref{rmk:extension}, we could extend this theorem and conclude that, if a network's reduced reactant complexes meet the parabola conditions of Theorem \ref{thm:bimolec}, then its positive steady-state variety is a parabola. However, the converse would not hold. Consider for example the network comprised of $4A\rightarrow 3A+B$ and $2B\rightarrow A+B$, which satisfies the conditions of Theorem \ref{thm:nonempty_pssv} but not the parabola conditions of Theorem \ref{thm:bimolec}. Its steady-state variety is defined by $y^2=\kappa_1/\kappa_2\cdot x^4$, or $y=\pm\sqrt{\kappa_1/\kappa_2} \cdot x^2$. The positive steady-state variety is therefore a parabola. While the (reduced) reactant complexes do not meet the criteria of the parabola classification theorem, they do share some similarities. The supports are disjoint, and the molecularity of one is twice that of the other. This suggests that a complete classification of higher-degree positive steady-state varieties based in similar network properties would require considering the ratio between and parities of the reduced reactant complexes' molecularities. 

While these generalizations are mathematically interesting, they are less applicable to real-world reaction networks than the at-most-bimolecular case. In chemistry, most valid reactions involve bimolecular reactant complexes (and, rarely, trimolecular ones), since the likelihood of more than two components coming into contact simultaneously, in the proper orientation and with sufficient energy, is very low. So, generalizations of our classification theorems to networks with complexes of higher molecularities would describe few biologically relevant networks. However, as mentioned in Remark \ref{rmk:product_complexes}, the theorems of Sections \ref{sec:bimolec} and \ref{sec:semicubical} apply to 2-reaction networks with product complexes of higher molecularity, provided that the reactant complexes are bimolecular. The authors of \cite{BBHosc2024} studied 3-reaction so-called ``bimolecular-sourced" networks and their oscillatory behavior. Generalizing the systematic approach and simplified network conditions of this paper to larger networks---for example, to 3-reaction networks---could lend valuable insight into biologically relevant bimolecular-sourced networks by providing a more holistic understanding of their positive steady-state varieties.

\section*{Acknowledgements}

This research was completed as part of the 2023 Pomona Research in Mathematics Experience, supported by Pomona College and the NSF (DMS-1560394). We thank Edray Goins, Alex Barrios, and all other faculty and participants of PRiME for their guidance, collaboration, and support. We also thank Elizabeth Gross, Eddie Price, and Anne Shiu for their comments at various stages of this project.

% \section{TO-DO's}
% \begin{enumerate}
%     \item finish edits/suggestions from Anne (Leo)
% \end{enumerate}

\bibliographystyle{plain}
\bibliography{ref}

\end{document}